\documentclass[a4paper,12pt]{amsart}
\usepackage{amssymb}
\usepackage{ifthen}
\usepackage{graphicx}
\usepackage{float}
\usepackage{caption}
\usepackage{subcaption}
\usepackage{cite}
\usepackage{amsfonts}
\usepackage{amscd}
\usepackage{amsxtra}
\usepackage{color}

\setlength{\textwidth}{16cm} \setlength{\oddsidemargin}{0cm}
\setlength{\evensidemargin}{0cm} \setlength{\footskip}{40pt}
\pagestyle{plain}

\newtheorem{thm}{Theorem}
\newtheorem{cor}{Corollary}
\newtheorem{lem}{Lemma}
\newtheorem{prop}{Proposition}

\newtheorem{conj}{Conjecture}
\newtheorem{prob}{Problem}
\newtheorem{remark}{Remark}

\theoremstyle{definition}
\newtheorem{defn}{Definition}[section]
\newtheorem{example}{Example}

\newenvironment{pf}[1][]{%
 \vskip 1mm
 \noindent
 \ifthenelse{\equal{#1}{}}%
  {{\slshape Proof. }}%
  {{\slshape #1.} }%
 }%
{\qed\bigskip}

\newcounter{alphabet}
\newcounter{tmp}
\newenvironment{Thm}[1][]{\refstepcounter{alphabet}%
\bigskip%
\noindent%
{\bf Theorem \Alph{alphabet}}%
\ifthenelse{\equal{#1}{}}{}{ (#1)}%
{\bf .} \itshape}{\vskip 8pt}

\makeatletter
\newcommand{\Ref}[1]{\@ifundefined{r@#1}{}{\setcounter{tmp}{\ref{#1}}\Alph{tmp}}}
\makeatother

\newenvironment{Lem}[1][]{\refstepcounter{alphabet}%
\bigskip%
\noindent%
{\bf Lemma \Alph{alphabet}}%
{\bf .} \itshape}{\vskip 8pt}

\newcommand{\IC}{{\mathbb C}}
\newcommand{\ID}{{\mathbb D}}






\def\be{\begin{equation}}
\def\ee{\end{equation}}

\newcommand{\bee}{\begin{enumerate}}
\newcommand{\eee}{\end{enumerate}}

\newcommand{\blem}{\begin{lem}}
\newcommand{\elem}{\end{lem}}
\newcommand{\bthm}{\begin{thm}}
\newcommand{\ethm}{\end{thm}}
\newcommand{\bcor}{\begin{cor}}
\newcommand{\ecor}{\end{cor}}
\newcommand{\beg}{\begin{example}}
\newcommand{\eeg}{\end{example}}
\newcommand{\begs}{\begin{examples}}
\newcommand{\eegs}{\end{examples}}
\newcommand{\bdefe}{\begin{defn}}
\newcommand{\edefe}{\end{defn}}
\newcommand{\bprob}{\begin{prob}}
\newcommand{\eprob}{\end{prob}}
\newcommand{\bques}{\begin{ques}}
\newcommand{\eques}{\end{ques}}
\newcommand{\bei}{\begin{itemize}}
\newcommand{\eei}{\end{itemize}}
\newcommand{\bcon}{\begin{conj}}
\newcommand{\econ}{\end{conj}}
\newcommand{\bcons}{\begin{conjs}}
\newcommand{\econs}{\end{conjs}}
\newcommand{\bprop}{\begin{prop}}
\newcommand{\eprop}{\end{prop}}
\newcommand{\br}{\begin{remark}}
\newcommand{\er}{\end{remark}}
\newcommand{\brs}{\begin{rems}}
\newcommand{\ers}{\end{rems}}
\newcommand{\bo}{\begin{obser}}
\newcommand{\eo}{\end{obser}}
\newcommand{\bos}{\begin{obsers}}
\newcommand{\eos}{\end{obsers}}
\newcommand{\bpf}{\begin{pf}}
\newcommand{\epf}{\end{pf}}
\newcommand{\ba}{\begin{array}}
\newcommand{\ea}{\end{array}}
\newcommand{\beq}{\begin{eqnarray}}
\newcommand{\beqq}{\begin{eqnarray*}}
\newcommand{\eeq}{\end{eqnarray}}
\newcommand{\eeqq}{\end{eqnarray*}}

\newcounter{minutes}\setcounter{minutes}{\time}
\divide\time by 60
\newcounter{hours}\setcounter{hours}{\time}
\multiply\time by 60 \addtocounter{minutes}{-\time}

\begin{document}
\bibliographystyle{amsplain}
\title[Bohr's inequality for analytic functions $\sum_k b_k z^{kp+m}$ and harmonic functions]{Bohr's inequality for analytic functions $\sum_k b_k z^{kp+m}$ and harmonic functions}

\thanks{
File:~\jobname .tex,
          printed: \number\day-\number\month-\number\year,
          \thehours.\ifnum\theminutes<10{0}\fi\theminutes}


\author{Ilgiz R Kayumov, and Saminathan Ponnusamy }

\address{I. R Kayumov, Kazan Federal University, Kremlevskaya 18, 420 008 Kazan, Russia
}
\email{ikayumov@kpfu.ru }

\address{S. Ponnusamy,
Indian Statistical Institute (ISI), Chennai Centre, SETS (Society
for Electronic Transactions and Security), MGR Knowledge City, CIT
Campus, Taramani, Chennai 600 113, India.
}
\email{samy@isichennai.res.in, samy@iitm.ac.in}

\subjclass[2000]{Primary: 30A05, 30A10, 30B10, 30H05, 41A58; Secondary: 40A30}
\keywords{Analytic and harmonic functions, $p$-symmetric functions, series expansion, Bohr's inequality, Schwarz lemma, subordination, and odd harmonic functions}

\begin{abstract}
We determine the Bohr radius for the class of all functions $f$ of the form $f(z)=\sum_{k=1}^\infty a_{kp+m} z^{kp+m}$ analytic in the unit disk $|z|<1$
and satisfy the condition $|f(z)|\le 1$ for all $|z|<1$. In particular, our result also contains a solution to a recent conjecture of
Ali, Barnard and Solynin \cite{AliBarSoly} for the Bohr radius for odd analytic functions, solved by the authors in \cite{KayPon1}.
We consider a more flexible approach by introducing the $p$-Bohr radius for harmonic functions which in turn contains the classical Bohr radius as special case.
Also, we prove several other new results and discuss $p$-Bohr radius for the class of odd harmonic bounded functions.
\end{abstract}

\thanks{
}

\maketitle
\pagestyle{myheadings}
\markboth{I. R. Kayumov and S. Ponnusamy}{Bohr's inequality for symmetric functions and harmonic functions}

\section{Preliminaries}
In 1914,  H. Bohr \cite{Bohr-14} proved that if the power series $f(z)=\sum_{k=0}^{\infty} a_kz^k$  converges in the open unit disk
$\ID :=\{z\in\IC:\, |z|<1\}$ and $|f(z)|<1$ for all $z\in \ID$, then the majorant series
$M_f(r):=\sum_{k=0}^{\infty}|a_k|r^k $ is less than or equal to $1$ for all
$|z|=r\leq 1/6$. The largest $r\leq 1$ such that the above inequality holds is referred to as the Bohr radius for the unit disk case.
The fact that the constant $1/3$ is best possible was established independently by F. Wiener, M. Riesz and I. Schur.
  Other proofs of this result were later
obtained by Sidon and Tomic.   Bohr's idea naturally extends to  functions of several complex variables and thus, a variety of results
on Bohr's theorem in higher dimension appeared recently. In this contexts and in other respects, we suggest the reader to refer
\cite{Abu,Abu2, Abu4, AAD2,AizenTark-01-2,Aizen-00-1,Aizen-05-3,BaluCQ-2006,BenDahKha,BoasKhavin-97-4,PaulSingh-04-11,Pop} and
the references there. For a detailed account of the development, we refer to the recent survey article
on this topic \cite{AAPon1} and the references therein. More recently, the present authors obtained the following result
as a corollary to a general result for symmetric functions and thereby settling the recent conjecture of
Ali et al. \cite{AliBarSoly}.

\begin{Thm}{\rm (\cite[Corollary 1]{KayPon1})}\label{KP2-odd1}
If $f(z)=\sum_{k=0}^{\infty} a_{2k+1}z^{2k+1}$ is odd analytic function in $\ID$ and $|f(z)| \leq 1$ in $\ID$, then
$M_f(r) \leq 1$  for $r \leq r_2=0.789991\ldots .$
The extremal function has the form $z(z^2-a)/(1-az^2)$.
\end{Thm}

Motivated by the work of Ali et al.\cite{AliBarSoly} and Theorem \Ref{KP2-odd1}, we raise the following

\bprob {\rm{(\cite{AliBarSoly})}\label{KP2-prob1}}
Given $p \in \mathbb{N}$ and $0\leq m \leq p$, determine the Bohr radius for the class of functions
$f(z)=\sum_{k=0}^{\infty} a_{pk+m}z^{pk+m}$ analytic in $\ID$
and $|f(z)| \leq 1$ in $\ID$.
\eprob

The case $m=1$ has been handled by the authors in \cite{KayPon1}.

One of the aims of this article is to solve this problem  completely  and present several of its consequences.
As remarked in \cite{KayPon1}, it brings serious difficulties because if we use sharp the inequalities
$|a_n| \leq 1-|a_0|^2$ $(n\geq 1)$ simultaneously (as in the classical case) then we will not be able to obtain
sharp result due to the fact that in the extremal case  $|a_0|<1$.
Also it is important that in the classical case there is no extremal function while in our case there is.



The paper is organized as follows. The solution to Problem \ref{KP2-prob1} is presented in Section \ref{KP2-sec2a} (see Theorem \ref{KP2-th3}) and its proof
is given in Section \ref{KP2-sec2}. In Section \ref{KP3-sec4}, we introduce the concept of $p$-Bohr radius for the class of bounded harmonic functions defined on the
unit disk and discuss the  Bohr inequality for planar harmonic functions as a special case. We expect that our approach will lead to several new investigations on the
general notion of the so-called Bohr's phenomenon.

\section{Bohr inequality for analytic functions of the form $\sum_k b_k z^{kp+m}$}\label{KP2-sec2a}

\begin{thm}\label{KP2-th3}
Let $p \in \mathbb{N}$ and $0\leq m \leq p$, $f(z)=\sum_{k=0}^{\infty} a_{pk+m}z^{pk+m}$ be analytic in $\ID$
and $|f(z)| \leq 1$ in $\ID$. Then
$$M_f(r) \leq 1 \mbox{ for } r \leq r_{p,m},
$$
where $ r_{p,m}$ is the maximal positive root of the equation
\be\label{KP3-eq4}
-6 r^{p-m} + r^{2(p-m)} + 8 r^{2p} +1= 0.
\ee
 The extremal function has the form $z^m(z^p-a)/(1-az^p)$, where
$$a=\left (1-\frac{\sqrt{1-{r_{p,m}}^{2p}}}{\sqrt{2}}\right )\frac{1}{{r_{p,m}}^p }.
$$
\end{thm}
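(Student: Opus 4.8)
The plan is to reduce to a single Schwarz-type function and then prove a sharp Bohr-type inequality for it. Since every exponent appearing in $f$ is congruent to $m$ modulo $p$, I would write $f(z)=z^m\phi(z^p)$ with $\phi(w)=\sum_{k=0}^\infty a_{pk+m}w^k$; on $|z|=1$ one has $|\phi(z^p)|=|f(z)|\le1$, and as $z^p$ covers the unit circle the maximum principle gives $|\phi(w)|\le1$ on $\ID$. Writing $b_k=a_{pk+m}$, $a=|b_0|$, $s=r^p$, $t=r^m$, we get $M_f(r)=t\bigl(a+\sum_{k\ge1}|b_k|s^k\bigr)$, so the whole problem becomes the estimation of $\sum_{k\ge1}|b_k|s^k$ in terms of $a$ and $s$.

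The crucial ingredient is the sharp weighted $\ell^2$-bound
\[
\sum_{k=1}^\infty|b_k|^2t^k\le\frac{(1-a^2)^2t}{1-a^2t}\qquad(0\le t<1),
\]
with equality precisely when $\phi$ is a disk automorphism. To prove it I would use the Schwarz--Pick representation $\phi(w)-\phi(0)=\dfrac{(1-a^2)\,w\,\omega(w)}{1+\overline{\phi(0)}\,w\,\omega(w)}$ with $\omega$ analytic, $|\omega|\le1$; substituting $w\mapsto\sqrt t\,w$ and setting $V(w)=\bigl(\phi(\sqrt t\,w)-\phi(0)\bigr)/(1-a^2)$ one has $V(0)=0$ and, because $\|\omega\|_\infty\le1$ forces $\bigl|V/(1-\overline{\phi(0)}V)\bigr|\le\sqrt t$ on the circle, the pointwise inequality $(1-a^2t)|V|^2\le t-2t\,\real\!\bigl(\overline{\phi(0)}V\bigr)$; integrating over $|w|=1$ annihilates the linear term (since $V(0)=0$) and yields $\|V\|_{H^2}^2\le t/(1-a^2t)$, which is the claim as $\|V\|_{H^2}^2=(1-a^2)^{-2}\sum_{k\ge1}|b_k|^2t^k$. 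Then Cauchy--Schwarz with the weight $\tau=s/a$ (admissible when $s<a$; otherwise let $\tau\uparrow1$), combined with this bound, gives
\[
\sum_{k=1}^\infty|b_k|s^k\le\Bigl(\sum_{k\ge1}|b_k|^2\tau^k\Bigr)^{1/2}\Bigl(\sum_{k\ge1}(s^2/\tau)^k\Bigr)^{1/2}\le\begin{cases}\dfrac{(1-a^2)s}{1-as},&a\ge s,\\[3mm]\dfrac{s\sqrt{1-a^2}}{\sqrt{1-s^2}},&a\le s,\end{cases}
\]
and the choice $\tau=s/a$ is exactly the one turning the whole chain into an equality for an automorphism, so that sharpness is retained.

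It then remains to maximize $M_f(r)\le t\bigl(a+(\text{the right-hand side})\bigr)$ over $a\in[0,1]$. An elementary computation shows that for $s=r^p\in[\tfrac13,\tfrac1{\sqrt2}]$ (and for $r\le r_{p,m}$ one indeed has $r^p\le\tfrac1{\sqrt2}$, directly from \eqref{KP3-eq4}) the maximum of $\alpha+\tfrac{(1-\alpha^2)s}{1-\alpha s}$ over $\alpha\in[0,1]$ equals $\dfrac{3-2\sqrt2\sqrt{1-s^2}}{s}$, attained at $\alpha=\tfrac1s\bigl(1-\tfrac{\sqrt{1-s^2}}{\sqrt2}\bigr)$, and it dominates the $a\le s$ contribution (the gap reduces to $(1-2s^2)^2\ge0$); for $s<\tfrac13$ the maximum is just $1$ and $M_f(r)\le t\le1$ trivially. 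Hence, when $r^p\ge\tfrac13$,
\[
M_f(r)\le r^m\,\frac{3-2\sqrt2\sqrt{1-r^{2p}}}{r^p}\le1\iff r^m\bigl(3-2\sqrt2\sqrt{1-r^{2p}}\bigr)\le r^p\iff 1-6r^{p-m}+r^{2(p-m)}+8r^{2p}\le0
\]
(the last step by squaring, both sides being positive), and an easy monotonicity analysis of $Q(v)=1-6v+v^2+8v^{2p/(p-m)}$, $v=r^{p-m}$ — it is decreasing then increasing on $(0,1)$ with $Q(0),Q(1)>0$, hence has exactly two zeros $r_1<r_{p,m}$ there, and $Q<0$ at $v=3^{(m-p)/p}$, so $3^{-1/p}\in(r_1,r_{p,m})$ — shows this holds precisely for $3^{-1/p}\le r\le r_{p,m}$, with $r_{p,m}$ the maximal root of \eqref{KP3-eq4}. (The degenerate cases $m=0$ and $m=p$ are immediate: the first is Bohr's theorem, $r_{p,0}=3^{-1/p}$; in the second \eqref{KP3-eq4} reads $r^{2p}=\tfrac12$ and $M_f(r)\le3-2\sqrt2\sqrt{1-r^{2p}}$ directly.) Sharpness is clear: for $\phi(w)=(a-w)/(1-aw)$ with the displayed value of $a$ every inequality above is an equality at $r=r_{p,m}$, so $M_f(r_{p,m})=1$ for $f(z)=z^m(z^p-a)/(1-az^p)$. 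The main obstacle is precisely this weighted estimate with the correctly chosen Cauchy--Schwarz weight: the elementary bounds $|b_k|\le1-a^2$ and $\sum|b_k|^2\le1-a^2$, used on their own, each yield only a strictly smaller radius, and only their marriage in the above weighted form forces the automorphism to be extremal.
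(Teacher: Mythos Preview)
Your proof is correct and follows essentially the same route as the paper: the reduction $f(z)=z^m\phi(z^p)$, the sharp weighted $\ell^2$ bound $\sum_{k\ge1}|b_k|^2\tau^k\le(1-a^2)^2\tau/(1-a^2\tau)$ (which the paper quotes from \cite{KayPon1} as Lemma~\Ref{KP2-lem2}, while you supply a direct Schwarz--Pick argument), Cauchy--Schwarz with the very same weights $\tau=s/a$ and $\tau\to1$ in the two regimes $a\gtrless s=r^p$, and the identical optimization over $a$ leading to the critical value $(3-2\sqrt2\sqrt{1-s^2})/s$ and the extremal automorphism. The only cosmetic differences are that you dispose of the case $a\le s$ by comparing it to the other case via $(1-2s^2)^2\ge0$, whereas the paper bounds it by $2r^{p+m}$ and invokes its Lemma~\ref{KP2-lem1}, and you verify $r_{p,m}^{2p}\le\tfrac12$ directly from the defining equation rather than through that lemma.
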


The case $p=2$ and $m=1$ has a special interest which is indeed Theorem \Ref{KP2-odd1} and it provides a solution to the conjecture of
Ali, Barnard and Solynin \cite{AliBarSoly}. More generally, it is a simple exercise to see that for the case $p=m,2m,3m$, the Bohr radii give
$$r_{m,m}=1/\sqrt[2m]{2}, ~r_{2m,m}= \sqrt[m]{r_2}, ~\mbox{ and }~r_{3m,m}=\sqrt[2m]{\frac{7 + \sqrt{17}}{16}},
$$
respectively, where $r_2$ is given in Theorem \Ref{KP2-odd1}. It is worth pointing out from the last case that $r_{3,1}$ gives the value $(\sqrt{7 + \sqrt{17}})/4$. The result for $m=0$ is well known \cite{AliBarSoly} which we now recall because of its independent interest.

\begin{cor} \label{Lemma-1} %
Let $p\geq 1$. If $f(z)=\sum_{k=0}^\infty a_{pk} z^{pk}$ is analytic in $\mathbb{D}$, and $\left|f(z)\right|\le 1$ in $\mathbb{D}$, then $M_f(r)\le 1$
for $0\leq r\leq r_{p,0}=1/\sqrt[p]{3}$.  The radius $r_{p,0}=1/\sqrt[p]{3}$ is best possible.
\end{cor}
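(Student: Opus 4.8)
The plan is to reduce the statement to Bohr's classical theorem of 1914 (recalled in Section~\ref{KP2-sec2a}, \cite{Bohr-14}) via the substitution $w=z^p$. First I would set $g(w)=\sum_{k=0}^\infty a_{pk}w^k$, so that $f(z)=g(z^p)$ as formal power series, hence also as analytic functions on $\ID$. The key point is that $g$ again maps $\ID$ into the closed unit disk: given any $w\in\ID$, choose $z\in\IC$ with $z^p=w$; then $|z|=|w|^{1/p}<1$, so $|g(w)|=|f(z)|\le 1$. Thus $g$ satisfies the hypotheses of Bohr's theorem, and therefore $\sum_{k=0}^\infty |a_{pk}|\,\rho^k\le 1$ for every $\rho\le 1/3$.

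Next I would substitute $\rho=r^p$. Since $M_f(r)=\sum_{k=0}^\infty |a_{pk}|\,r^{pk}$, the inequality above gives $M_f(r)\le 1$ precisely when $r^p\le 1/3$, i.e.\ for $0\le r\le r_{p,0}=1/\sqrt[p]{3}$, which is the asserted bound. (The same conclusion drops out of Theorem~\ref{KP2-th3} on putting $m=0$: equation \eqref{KP3-eq4} then collapses to $9r^{2p}-6r^p+1=(3r^p-1)^2=0$, whose only positive root is $1/\sqrt[p]{3}$; and the displayed value of $a$ becomes $a=1$, so the ``extremal function'' degenerates to a constant, in agreement with the fact noted in the introduction that there is no extremal in the classical case.)

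For sharpness I would transfer the classical extremizers through the same substitution. For $a\in(0,1)$ put $f_a(z)=(a-z^p)/(1-az^p)$; this is analytic and bounded by $1$ in $\ID$ and contains only powers of $z^p$, so it lies in the class under consideration, and $g_a(w)=(a-w)/(1-aw)$ is the usual M\"obius extremizer for the classical problem. Its majorant is $\sum_{k}|[w^k]g_a|\,\rho^k = a+(1-a^2)\rho/(1-a\rho)$, which exceeds $1$ for every $\rho>1/3$ once $a$ is close enough to $1$; reading this with $\rho=r^p$ shows $M_{f_a}(r)>1$ for each $r>r_{p,0}$ and $a$ near $1$. Hence $r_{p,0}=1/\sqrt[p]{3}$ cannot be increased.

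There is essentially no serious obstacle here; the only point deserving a word of care is the norm transfer in the first step, namely that $z\mapsto z^p$ carries $\ID$ \emph{onto} $\ID$, which is what lets us pass the bound $|f|\le 1$ to $|g|\le 1$ (injectivity is not needed). The rest is the classical Bohr inequality together with the sharpness of its constant $1/3$, both already available to us.
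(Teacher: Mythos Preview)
Your argument is correct. The substitution $w=z^p$ reduces the problem verbatim to the classical Bohr theorem, and your sharpness argument via $f_a(z)=(a-z^p)/(1-az^p)$ is exactly right. The only point worth a word is that the classical statement is usually phrased with strict inequality $|f|<1$, while here we have $|f|\le1$; but either one scales by a constant less than $1$ and passes to the limit, or one notes that the coefficient bound $|a_n|\le 1-|a_0|^2$ (Wiener) already holds under $|f|\le1$, so there is no issue.

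The paper does not give an independent proof of this corollary: it is quoted as a known result from \cite{AliBarSoly} and simultaneously presented as the $m=0$ case of Theorem~\ref{KP2-th3}. Your parenthetical remark that \eqref{KP3-eq4} collapses to $(3r^p-1)^2=0$ when $m=0$ is precisely that latter derivation, so you have in fact covered both routes. The direct substitution you give as your main argument is more elementary than invoking the full machinery of Theorem~\ref{KP2-th3} (Lemma~\Ref{KP2-lem2}, Cauchy--Schwarz, the case split on $a\gtrless r^p$), and it makes transparent why the extremal degenerates: the general extremal $z^m(z^p-a)/(1-az^p)$ loses its outer factor when $m=0$ and the optimal $a$ tends to $1$, matching the classical situation in which no extremal exists.
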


If $a_0=0$, it follows from the proof of Theorem \ref{KP2-th3} that the number $r_{p,0}=1/\sqrt[p]{3}$ in Corollary \ref{Lemma-1} can be evidently replaced
by $r_{p,0}=1/\sqrt[2p]{2}$ which is the Bohr radius in this case.  Moreover, the radius $r=1/\sqrt[2p]{2}$ in this case is best possible as
demonstrated by the function
$$\varphi_\alpha (z)=z^p\left (\frac{\alpha-z^p}{1-\alpha z^p}\right )
$$
with $\alpha =1/\sqrt[2p]{2}.$ The case $p=1$ of this result coincides with $p=2$ of a general result in
Corollary \ref{KP3-cor3} (especially when $g(z)\equiv 0$).

%

%
%

%


\section{Proof of Theorem \ref{KP2-th3}}\label{KP2-sec2}

For the proof of Theorem \ref{KP2-th3}, we need the following lemmas.

\begin{lem}\label{KP2-lem1}
Let $0\leq m \leq p$ and $r_{p,m}$ be as in Theorem \ref{KP2-th3}. Then $2{r_{p,m}}^{p+m} \leq 1$.
\end{lem}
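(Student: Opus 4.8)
The plan is to treat the claimed inequality $2r_{p,m}^{p+m}\le 1$ as a statement purely about the root of \eqref{KP3-eq4}, and reduce it to a one-variable problem. Write $x=r_{p,m}^{p-m}$ and $y=r_{p,m}^{p}$, so that $r_{p,m}^{p+m}=y^2/x$ and the defining equation \eqref{KP3-eq4} becomes
\[
1-6x+x^2+8y^2=0,\qquad\text{i.e.}\qquad y^2=\frac{6x-1-x^2}{8}.
\]
Since $0\le m\le p$ we have $0<x\le 1$ when $0<r_{p,m}\le 1$ (with $x=1$ exactly when $m=p$), and similarly $0<y\le 1$. The inequality to be proved, $2y^2\le x$, is therefore equivalent to
\[
\frac{6x-1-x^2}{4}\le x,\qquad\text{that is,}\qquad x^2-2x+1\ge 0,
\]
which is just $(x-1)^2\ge 0$ and hence always true. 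So once the substitution is in place the inequality is immediate; equality holds precisely when $x=1$, i.e. when $p=m$, consistent with $r_{m,m}=1/\sqrt[2m]{2}$.

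The step that genuinely needs care is \emph{not} the algebra above but the verification that the relevant root $r_{p,m}$ lies in $(0,1]$ so that the monotone substitutions $x=r^{p-m}$, $y=r^p$ behave as claimed and $y^2$ computed from the equation is genuinely nonnegative (equivalently $6x-1-x^2\ge 0$, i.e. $x\in[3-2\sqrt2,\,3+2\sqrt2]\supset(0,1]$ minus a small interval near $0$). Concretely I would argue: set $G(r)=-6r^{p-m}+r^{2(p-m)}+8r^{2p}+1$; then $G(0)=1>0$, while $G(1)=-6+1+8+1=4>0$, and $G$ dips below zero in between (for instance $G$ evaluated at a point where $r^{p-m}=3-2\sqrt2$ gives a negative value once the $8r^{2p}$ term is small, which it is since $p-m<2p$). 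Hence the maximal positive root $r_{p,m}$ is indeed in $(0,1)$ when $m<p$, and the above computation of $y^2$ from the equation is legitimate. For the boundary case $m=p$ one has $G(r)=8r^{2m}-5$ with root $r_{m,m}=(5/8)^{1/(2m)}$ — wait, this must be reconciled: in fact when $m=p$ equation \eqref{KP3-eq4} reads $-6+1+8r^{2p}+1=0$, giving $r^{2p}=1/2$, i.e. $r_{m,m}=1/\sqrt[2m]{2}$ as stated, and then $2r_{m,m}^{2m}=1$, the equality case.

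In summary, the proof is: (i) record that $r_{p,m}\in(0,1]$ by a sign analysis of $G$, so that $x:=r_{p,m}^{p-m}\in(0,1]$; (ii) solve \eqref{KP3-eq4} for $8r_{p,m}^{2p}=6x-1-x^2$; (iii) observe $2r_{p,m}^{p+m}\le 1 \iff 2r_{p,m}^{2p}\le r_{p,m}^{p-m}\cdot 1 \iff (6x-1-x^2)/4\le x \iff (x-1)^2\ge0$. The only mild subtlety is bookkeeping the inequality $r_{p,m}^{p+m}=r_{p,m}^{2p}/r_{p,m}^{p-m}$, valid since $p+m=2p-(p-m)$, and this is exactly where the hypothesis $m\le p$ is used to keep the exponent $p-m$ nonnegative.
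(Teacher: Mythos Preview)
Your core argument is correct and takes a genuinely different, simpler route than the paper. The paper sets $y=r_{p,m}^{p+m}$, rewrites \eqref{KP3-eq4} as the quadratic $(8+r_{p,m}^{-2m})y^2-6y+r_{p,m}^{2m}=0$, solves explicitly for $y$, and then bounds $2y$ by $\sup_{s\in(0,1]}(6+4\sqrt{2}\sqrt{1-s})/(8+1/s)$, which it asserts equals $1$; verifying that supremum is a separate (if routine) calculation. Your substitution $x=r_{p,m}^{p-m}$ collapses the entire claim to $(x-1)^2\ge 0$ in one line and makes the equality case $m=p$ transparent. This is a clear gain in simplicity.

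Two minor points on your side discussion. First, the verification that $r_{p,m}\in(0,1]$ is not actually needed for the inequality: as soon as $r_{p,m}>0$ is a root (which is assumed by the statement), $x=r_{p,m}^{p-m}>0$ and your algebra goes through regardless; the nonnegativity of $y^2$ is automatic since $y=r_{p,m}^{p}$ is real, and the constraint $x\in[3-2\sqrt2,\,3+2\sqrt2]$ is then a \emph{consequence} of the equation, not a hypothesis to check. Second, your specific claim that $G$ is negative at the point where $r^{p-m}=3-2\sqrt2$ is wrong: there the terms $-6x+x^2+1$ vanish exactly, so $G=8r^{2p}>0$. (Indeed for $m=0$ one has $G(r)=(3r^p-1)^2$, which never dips below zero at all, yet the double root $r_{p,0}=3^{-1/p}$ is perfectly fine.) Since this paragraph is inessential to the proof, the cleanest fix is simply to delete it and the stream-of-consciousness self-correction in the $m=p$ case.
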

\begin{proof} Let $y=r_{p,m}^{p+m}$. Then \eqref{KP3-eq4} becomes a quadratic equation
$$(8 +1/r_{p,m}^{2m})y^2-6y+r_{p,m}^{2m}=0
$$
which has two solutions
$$
y=\frac{3 \pm 2\sqrt{2}\sqrt{1-r_{p,m}^2}}{8 +1/r_{p,m}^2} \leq \frac{3 +2\sqrt{2}\sqrt{1-r_{p,m}^2}}{8 +1/r_{p,m}^2}.
$$
Consequently,
$$2r_{p,m}^{p+1}=2y \leq \frac{6 +4\sqrt{2}\sqrt{1-r_{p,m}^2}}{8 +1/r_{p,m}^2} \leq \sup_{r \in (0,1]}\frac{6 +4\sqrt{2}\sqrt{1-r^2}}{8 +1/r^2}=1,
$$
which completes the proof of Lemma \ref{KP2-lem1}.
\end{proof}

\begin{lem}\label{New} Let $0\leq m \leq p$ and $r_{p,m}$ be as in Theorem \ref{KP2-th3}. Then
$$\frac{1}{r^{p-m}}\left(3-2 \sqrt{2}\sqrt{1-r^{2p}} \right)=1.
$$
\end{lem}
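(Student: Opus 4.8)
The plan is to read the defining equation \eqref{KP3-eq4} as a quadratic in the single quantity $u := r^{p-m}$ and then to identify the identity in the lemma by picking out the correct root (the lemma being understood as the assertion that the displayed identity holds at $r=r_{p,m}$). Substituting $u = r^{p-m}$ into \eqref{KP3-eq4}, and keeping $r^{2p}$ as it stands, rewrites the equation as
$$u^2 - 6u + \bigl(1 + 8 r^{2p}\bigr) = 0,$$
whose discriminant is $36 - 4(1 + 8 r^{2p}) = 8(1 - r^{2p})$. Solving gives
$$r^{p-m} = u = 3 \pm 2\sqrt{2}\,\sqrt{1 - r^{2p}} \qquad (r = r_{p,m}),$$
so the entire content of the lemma is that here the minus sign occurs; dividing that relation by $r_{p,m}^{p-m}>0$ then produces exactly the claimed identity.

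Before the sign can be pinned down I would first record that $r_{p,m}\in(0,1)$, which guarantees both that the radicals above are real and that $r_{p,m}^{p-m}\le 1$. This is immediate from the shape of the left-hand side $g(r) := -6 r^{p-m} + r^{2(p-m)} + 8 r^{2p} + 1$ of \eqref{KP3-eq4}: for $r \ge 1$ one has $8 r^{2p} \ge 8$ and $r^{2(p-m)} - 6 r^{p-m} + 1 = (r^{p-m}-3)^2 - 8 \ge -8$, hence $g(r) \ge 0$; moreover these two estimates cannot be simultaneously tight (that would force $r^{2p}=1$ and $r^{p-m}=3$ at once), so in fact $g(r) > 0$ for every $r \ge 1$. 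Consequently every positive root of $g$ — in particular the maximal one, $r_{p,m}$ — lies in $(0,1)$. (This bound is in any case tacitly used already in the proof of Lemma \ref{KP2-lem1}, where $\sqrt{1 - r_{p,m}^{2}}$ appears.)

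With $r_{p,m}\in(0,1)$ in hand the sign selection is forced: since $p\ge m$ we get $r_{p,m}^{p-m} \le 1 < 3 \le 3 + 2\sqrt{2}\,\sqrt{1 - r_{p,m}^{2p}}$, so the plus sign is impossible and therefore $r_{p,m}^{p-m} = 3 - 2\sqrt{2}\,\sqrt{1 - r_{p,m}^{2p}}$. Dividing by $r_{p,m}^{p-m}$ gives
$$\frac{1}{r_{p,m}^{p-m}}\bigl(3 - 2\sqrt{2}\,\sqrt{1 - r_{p,m}^{2p}}\bigr) = 1,$$
which is the assertion. There is no real obstacle in this argument: the computation is a one-line reduction to a quadratic, and the only point deserving a word of justification is the choice of sign, which rests solely on the elementary inequality $r_{p,m}<1$.
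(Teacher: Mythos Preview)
Your argument is correct and follows essentially the same route as the paper: both rewrite \eqref{KP3-eq4} as a quadratic in $r^{p-m}$ and discard the ``plus'' root because $r_{p,m}^{p-m}\le 1$. The only differences are cosmetic---you handle $m=p$ and $m<p$ uniformly where the paper splits into two cases, and you supply an explicit verification that $g(r)>0$ for $r\ge1$ where the paper simply asserts that all positive roots lie below $1$.
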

\begin{proof} At first, suppose that $m<p$ and let $y=r^{p-m}$. Then \eqref{KP3-eq4} reduces to a quadratic equation
$$-6 y + y^2 + 8 r^{2p} +1= 0,
$$ which has two solutions $y_1=3 +2 \sqrt{2}\sqrt{1-r^{2p}}$ and $y_2=3 -2 \sqrt{2}\sqrt{1-r^{2p}}$.
The solution $y=y_1$ is impossible because all positive roots of the initial equation are less than $1$. Consequently
$y=y_2$.

Now, consider the case $m=p$. In this case $r_{m,m}=1/\sqrt[2m]{2}$ so that
$$ \frac{1}{r^{p-m}}\left(3-2 \sqrt{2}\sqrt{1-r^{2p}} \right)=3-2 \sqrt{2}\sqrt{1-r^{2m}}=3-2 \sqrt{2}\sqrt{1-1/2}=1
$$
and the proof is complete.
\end{proof}

We now recall the following lemma from \cite{KayPon1}.

\begin{Lem}\label{KP2-lem2}
 Let $|a|<1$ and $0 < R \leq 1$. If $g(z)=\sum_{k=0}^{\infty} b_kz^k$ is analytic and satisfies the inequality $|g(z)| \leq 1$ in $\ID$, then
the following sharp inequality holds:
\begin{equation}\label{KP2-eq3}
\sum_{k=1}^\infty |b_k|^2R^{pk} \leq R^{p}\frac{(1-|b_0|^2)^2}{1-|b_0|^2R^{p}}.
\end{equation}
\end{Lem}

\begin{proof}[Proof of Theorem {\rm \ref{KP2-th3}}]
The case $m=p$ follows from known result (\cite{KayPon1}) and thus, we may assume that $0\leq m < p$.
Following the idea from \cite{KayPon1}, we consider $f(z)=\sum_{k=0}^{\infty} a_{pk+m} z^{pk+m}$, where $|f(z)|\le 1$ for $z\in\ID$.
Also, let $r=r_{p,m}$. At first, we remark that the function $f$ can be represented as $f(z)=z^m g(z^p)$, where $|g(z)| \leq 1$ in $\ID$ and
$g(z)=\sum_{k=0}^{\infty} b_kz^k$ is analytic in $\ID$  with $b_k=a_{pk+m}$.  Let $|b_0|=a$. Choose any $\rho >1$ such that $\rho r \leq 1$. Then it follows that
\begin{eqnarray*}
\sum_{k=1}^{\infty} |a_{pk+m}|r^{pk} &=& \sum_{k=1}^{\infty}|b_k| r^{pk}  \\
&\leq & \sqrt{\sum_{k=1}^{\infty}|b_k|^2 \rho^{pk}r^{pk}} \sqrt{\sum_{k=1}^{\infty} \rho^{-pk}r^{pk}} \\
&\leq & \sqrt{r^p\rho^p\frac{(1-a^2)^2}{1-a^2r^p\rho^p}}\, \sqrt{\frac{\rho^{-p}r^p}{1-\rho^{-p}r^{p}}}\\
&= &  \frac{r^p(1-a^2)}{\sqrt{1-a^2r^p\rho^p}}\,  \frac{1}{\sqrt{1-\rho^{-p}r^{p}}}.
\end{eqnarray*}

In the second and the third steps above we have used the classical Cauchy-Schwarz inequality and \eqref{KP2-eq3} with $R=\rho r$, respectively.
 Hence
\begin{equation} \label{Additional}
\sum_{k=1}^{\infty} |a_{pk+m}|r^{pk} \leq \frac{r^p(1-a^2)}{\sqrt{1-a^2r^p\rho^p}}\,  \frac{1}{\sqrt{1-\rho^{-p}r^{p}}}.
\end{equation}

We need to consider the cases $a \ge r^p$ and $a<r^p$ separately.

\noindent
\vspace{6pt}
{\bf Case 1: $a \ge r^p$.} In this case set $\rho=1/\sqrt[p]{a}$ and
obtain
\begin{equation}\label{KP2-eq4}
\sum_{k=0}^{\infty}|a_{pk+m}|r^{pk+m} \leq r^m \left(a+r^p\frac{(1-a^2)}{1-r^p a}\right).
\end{equation}
For convenience, we may let $\alpha =r^p$ and consider
$$\psi (x)=x+\alpha  \frac{(1-x^2)}{1- \alpha x}, \quad x\in [0,1].
$$
As in \cite{KayPon1}, we find that $\psi (x) $ attains its maximum at $x=x_1$, where
$$x_1=\left (1-\frac{\sqrt{1-\alpha ^2}}{\sqrt{2}}\right )\frac{1}{\alpha }, \quad \alpha \ge \frac{1}{3},
$$
and thus, $\psi (x)\leq \psi (x_1)$. Consequently, by \eqref{KP2-eq4},  we find for the $r=r_{p,m}$ defined in Theorem \ref{KP2-th3} that
\begin{equation} \label{Finish1}
\sum_{k=0}^{\infty}|a_{pk+m}|r^{pk+m} \leq \frac{1}{r^{p-m}}\left(3-2 \sqrt{2}\sqrt{1-r^{2p}} \right)=1,
\end{equation}
where we have used Lemma \ref{New}.

\noindent
\vspace{6pt}
{\bf Case 2: $a < r^p$.} In this case we set $\rho=1/r$ and apply the inequality \eqref{Additional}. As a result we get
\begin{equation} \label{Finish2}
\sum_{k=0}^{\infty} |a_{pk+m}|r^{pk+m} \leq r^m(a+r^p \sqrt{1-a^2}/\sqrt{1-r^{2p}}) \leq 2r^{p+m} \leq 1.
\end{equation}
Here we omitted the critical point $a=\sqrt{1-r^{2p}}$ because it is less than or equal to $r^p$ only in the case $r^{2p}>1/2$ which
contradicts Lemma \ref{KP2-lem1}.

The last inequality in \eqref{Finish2} is derived in Lemma \ref{KP2-lem1}.

Inequalities (\ref{Finish1}) and (\ref{Finish2}) finish the proof of the first part of Theorem \ref{KP2-th3}. Now we have to say a few words about extremal.
We set $f(z)=z^m(z^p-a)/(1-az^p)$ with
$a=r^{-p}\left (1-\frac{\sqrt{1-{r}^{2p}}}{\sqrt{2}}\right )$ and then calculate the Bohr radius for it. It coincides with $r$.

Certainly, an extremal function is unique up to a rotation of $a$. To see this we just trace our inequalities and
see that the equality holds only when $|b_0|=a$.
\end{proof}

\section{The $p$-Bohr radius of bounded harmonic functions}\label{KP3-sec4}

Suppose that $f=h+\overline{g}$ is a harmonic mapping of $\mathbb{D}$ such that  $|f(z)|<1$ with
$h(z)=\sum_{k=0}^{\infty}a_{k}z^{k}$ and $g(z)=\sum_{k=1}^{\infty}b_{k}z^{k}$. Then $|a_{0}|<1$. Also, following
the proof from \cite{CPW2-jmaa} (see also \cite{CPW2-fil}), for $\theta\in [0, 2\pi )$, let
$$v_{\theta}(z)={\rm Im\,}(e^{i\theta}f(z))
$$
and observe that
$$v_{\theta}(z)  = {\rm Im\,}( e^{i\theta}h(z)+\overline{e^{-i\theta}g(z)})
={\rm Im\,}(e^{i\theta}h(z)-e^{-i\theta}g(z)).
$$
Because $|v_{\theta}(z)| <1$, it follows that
\be\label{KP3-eq1}
e^{i\theta}h(z)-e^{-i\theta}g(z)\prec K(z),
\ee
where $\prec$ denotes the usual subordination (cf. \cite{Abu,AAPon1,KayPon1}),
$$K(z)=\lambda+\frac{2}{\pi}\log \left (\frac{1+\xi z}{1-z} \right )=\lambda +\frac{2}{\pi}\sum_{k=1}^\infty \frac{(1-(-\xi)^k)}{k} z^n,
$$
$\xi=e^{-i\pi\mbox{Im}(\lambda)}$ and $\lambda=e^{i\theta}h(0)=e^{i\theta}a_0$. Note that
$K(z)$ maps $\ID$ onto a convex domain with $K(0)=\lambda$
and $K'(0)=\frac{2}{\pi}(1+\xi)$, and therefore, by the definition of subordination, it follows that
$$e^{i\theta}h(z)-e^{-i\theta}g(z) = K(w(z)) =\lambda+\frac{2}{\pi}\log \left (\frac{1+ \xi w(z) }{1-w(z)} \right ),
$$
where $w$ is analytic in $\ID$  satisfying $w(0)=0$ and $|w(z)|<1$ for $z\in\ID$.

Now, we may set $a_0=0$. Then $\lambda =0$ and hence, $\xi=1$. In this case, \eqref{KP3-eq1} reduces to
\be\label{KP3-eq2}
e^{i\theta}h(z)-e^{-i\theta}g(z)\prec \frac{2}{\pi}\log \left (\frac{1+ z}{1-z} \right )=\frac{4}{\pi}\sum_{k=1}^\infty \frac{1}{2k-1} z^{2k-1}.
\ee
Thus,
$$|h(z)-e^{-2i\theta}g(z) | \leq \frac{2}{\pi}\log\left (\frac{1+|z|}{1-|z|} \right )
$$
and hence,
$$|h(z)|+|g(z)| \leq  \frac{2}{\pi}\log \left ( \frac{1+|z|}{1-|z|}\right )
$$
as a consequence of the arbitrariness of $\theta$ in $[0, 2\pi )$. Thus, we have

\begin{thm}\label{KP3-th1}
Let $f=h+\overline{g}$ be a harmonic mapping in $\ID$ such that $f(0)=h(0)=0$ and $|f(z)| < 1$ in $\ID$.
Then $|h(z)|+|g(z)| \leq 1$
for $|z| \leq r_0=\tanh(\pi/4)$. The number $r_0$ is sharp as the functions
$$f_0(z)=\frac{2e^{i\alpha}}{\pi}\mbox{\rm Im}\left(\log\frac{1+e^{i\beta} z}{1-e^{i\beta} z}\right)
$$
show, whose values are confined to a diametral segment of the disk $\ID$. Here $\alpha$ and $\beta$ are real numbers.
\end{thm}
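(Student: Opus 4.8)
The plan is to read the Bohr radius off the inequality already assembled in the paragraph preceding the statement, namely
\[
|h(z)|+|g(z)| \le \frac{2}{\pi}\log\left(\frac{1+|z|}{1-|z|}\right)\qquad (z\in\ID),
\]
and then to verify that the exhibited functions $f_0$ saturate it. For completeness I would first recap the three moves behind this inequality. For fixed $\theta\in[0,2\pi)$ the function $v_\theta=\operatorname{Im}(e^{i\theta}f)=\operatorname{Im}(e^{i\theta}h-e^{-i\theta}g)$ is the imaginary part of the analytic function $P_\theta=e^{i\theta}h-e^{-i\theta}g$, which vanishes at $0$ (since $h(0)=0=g(0)$) and maps $\ID$ into the strip $\{|\operatorname{Im}w|<1\}$; since $\frac{2}{\pi}\log\frac{1+z}{1-z}$ is a conformal map of $\ID$ onto that convex strip fixing $0$, the subordination principle gives $P_\theta\prec\frac{2}{\pi}\log\frac{1+z}{1-z}$. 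Because the majorant has nonnegative Taylor coefficients, this upgrades to $|P_\theta(z)|\le\frac{2}{\pi}\log\frac{1+|z|}{1-|z|}$, i.e.\ $|h(z)-e^{-2i\theta}g(z)|\le\frac{2}{\pi}\log\frac{1+|z|}{1-|z|}$ after removing the unimodular factor $e^{i\theta}$; finally, choosing $\theta$ so that $-e^{-2i\theta}g(z)$ is a nonnegative multiple of $h(z)$ (taking $\theta=0$ when $g(z)=0$) turns the left side into $|h(z)|+|g(z)|$.

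Next I would carry out the elementary radius computation. The map $\Phi(r)=\frac{2}{\pi}\log\frac{1+r}{1-r}$ is continuous and strictly increasing on $[0,1)$ with $\Phi(0)=0$ and $\Phi(r)\to\infty$ as $r\to1^-$, so $\Phi(r)\le1$ holds exactly for $r\le r_0$, where $\Phi(r_0)=1$. Solving $\log\frac{1+r_0}{1-r_0}=\pi/2$ gives $\frac{1+r_0}{1-r_0}=e^{\pi/2}$, hence $r_0=\dfrac{e^{\pi/2}-1}{e^{\pi/2}+1}=\tanh(\pi/4)$; together with the displayed inequality this yields $|h(z)|+|g(z)|\le1$ for $|z|\le r_0$.

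For sharpness I would analyse $f_0$ directly. Put $L(z)=\log\frac{1+e^{i\beta}z}{1-e^{i\beta}z}$, analytic in $\ID$ with $L(0)=0$ and $|\operatorname{Im}L(z)|<\pi/2$ (the image of $\ID$ under $\log\frac{1+\zeta}{1-\zeta}$ being the strip $\{|\operatorname{Im}w|<\pi/2\}$). Using $\operatorname{Im}L=(L-\overline L)/(2i)$, one gets $f_0=h_0+\overline{g_0}$ with $h_0=\frac{e^{i\alpha}}{\pi i}L$ and $g_0=\frac{e^{-i\alpha}}{\pi i}L$, both analytic, both vanishing at $0$, with $|f_0(z)|=\frac{2}{\pi}|\operatorname{Im}L(z)|<1$ (so $f_0$ is admissible, its values lying on a diameter of $\ID$) and $|h_0(z)|+|g_0(z)|=\frac{2}{\pi}|L(z)|$. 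Specialising to $\beta=0$ and $z=r\in(0,1)$ makes $|L(r)|=\log\frac{1+r}{1-r}$, so $|h_0(r)|+|g_0(r)|=\Phi(r)$, which exceeds $1$ for every $r>r_0$; thus $r_0$ cannot be enlarged.

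I do not anticipate a real obstacle: the analytic core — the passage from the bounded real harmonic functions $v_\theta$ to the convex majorant, together with the coefficient‑positivity estimate and the rotation in $\theta$ — is already carried out before the statement, so what remains is the single transcendental equation for $r_0$ and the bookkeeping identifying $(h_0,g_0)$. The only point demanding a little care is that last identification, i.e.\ checking that $f_0$ as written is genuinely the imaginary part of a bounded analytic function scaled by a unimodular constant, and hence a legitimate harmonic map that turns the estimate into an equality along a radius.
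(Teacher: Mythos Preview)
Your proposal is correct and follows exactly the paper's approach: the paper derives the inequality $|h(z)|+|g(z)|\le\frac{2}{\pi}\log\frac{1+|z|}{1-|z|}$ via the subordination $e^{i\theta}h-e^{-i\theta}g\prec\frac{2}{\pi}\log\frac{1+z}{1-z}$ and the arbitrariness of $\theta$ in the paragraph immediately preceding the theorem, and you simply fill in the two details the paper leaves implicit, namely solving $\frac{2}{\pi}\log\frac{1+r_0}{1-r_0}=1$ for $r_0=\tanh(\pi/4)$ and checking that $f_0$ saturates the bound along a radius.
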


In view of Theorem \ref{KP3-th1} it is natural to ask question about the Bohr radius for a pair $(h,g)$ of  analytic functions in the disk $\ID$
that satisfies the inequality $|h(z)|+|g(z)| \leq 1$ in $\ID$. This provides several new information on the Bohr radius.

\bprop\label{prop1}
Assume that $h(z)=\sum_{k=0}^\infty a_k z^k$ and $g(z)=\sum_{k=0}^\infty b_k z^k$ are  two analytic functions in the disk $\ID$
satisfying the condition $|h(z)|+|g(z)| \leq 1$ in $\ID$. If $g(0)=0$, then we have
\be\label{KP3-eq8}
|a_0|+\sum_{k=1}^\infty (|a_k|+|b_k|)r^k \leq 1 ~\mbox{ for $|z|=r \leq 1/3$}.
\ee
\eprop
\bpf
The proof is almost identical to the classical proof of Bohr's theorem. Let us consider the function $\Phi(z)=h(z)+e^{i\theta}g(z)$,
where $\theta$ is arbitrary. This function satisfies the inequality $|\Phi(z)| \leq 1$ in the unit disk $\ID$ and
consequently, $|a_n+e^{i\theta}b_n| \leq 1-|a_0|^2$. Hence, $|a_n|+|b_n| \leq 1-|a_0|^2$ for $n\geq 1$.
It means that
$$|a_0|+\sum_{k=1}^\infty (|a_k|+|b_k|)r^k \leq |a_0|+(1-|a_0|^2)\frac{r}{1-r} \leq 1 ~\mbox{ for }~ r \leq 1/3,
$$
and thus, \eqref{KP3-eq8} follows.

A natural question is to whether the inequality \eqref{KP3-eq8} is true if we replace $|a_0|$ in \eqref{KP3-eq8} by $|a_0|+|b_0|$.
It turns out that this proposition is false without an additional information on $g$ or $h$. To prove this, we let $a>0$ and consider the functions
$$h(z)=\frac{z+a}{2\sqrt{1+a^2}}, \quad g(z)=\frac{z-a}{2\sqrt{1+a^2}}
$$
so that $h(0)=a/(2\sqrt{1+a^2})$ and $h(0)=-g(0)$. Also, we have
\beqq
(|h(z)|+|g(z)|)^2&=&|h(z)|^2+|g(z)|^2+2|h(z)||g(z)|\\
&=&\frac{2(|z|^2+a^2)+2|z^2-a^2|}{4(1+a^2)} \\
&\leq & \frac{2(|z|^2+a^2)+2(|z|^2+a^2)}{4(1+a^2)} < 1
\eeqq
for $z \in \ID$. On the other hand, for this function, we obtain that
$$\sum_{k=0}^\infty (|a_k|+|b_k|)r^k = \frac{r+a}{\sqrt{1+a^2}} > 1 \mbox{ for } r>r_0(a)=\sqrt{1+a^2}-a.
$$
But,
$$r_0(a)=\frac{1}{\sqrt{1+a^2}+a} \to 0 \mbox{ as } a \to +\infty.
$$
This example shows that Proposition \ref{prop1} has a precise form and that the condition $g(0)=0$ cannot be ignored.
Otherwise, there will be no positive Bohr radius. Thus, $|a_0|$ in \eqref{KP3-eq8} cannot be replaced by $|a_0|+|b_0|$.
\epf

However, there is a generalization of the classical Bohr theorem in which the initial condition $g(0)=0$ in Proposition \ref{prop1}
can be omitted.

\bprop\label{prop2}
Assume that $h(z)=\sum_{k=0}^\infty a_k z^k$ and $g(z)=\sum_{k=0}^\infty b_k z^k$ are  two analytic functions in the disk $\ID$
satisfying the condition $|h(z)|+|g(z)| \leq 1$ in $\ID$. Then, we have
\be\label{KP3-eq6}
\sqrt{|a_0|^2+|b_0|^2}+ \sum_{k=1}^\infty \left (\sqrt{|a_k|^2+|b_k|^2}\right )r^k \leq 1 \mbox{ for } r \leq 1/3.
\ee
\eprop\bpf
We again consider the function $\Phi(z)=h(z)+e^{i\theta}g(z)$ so that $|\Phi(z)| \leq 1$ for $z\in\ID$ and consequently,
the classical Wiener inequality gives
$$|a_n+e^{i\theta}b_n| \leq 1-|a_0+e^{i\theta}b_0|^2  ~\mbox{ for $n\ge 1$}.
$$
This inequality together with the inequality $|a_n-e^{i\theta}b_n| \leq 1-|a_0-e^{i\theta}b_0|^2$ gives
$$|a_n+e^{i\theta}b_n| +|a_n-e^{i\theta}b_n| \leq 2-|a_0+e^{i\theta}b_0|^2 -|a_0-e^{i\theta}b_0|^2=2(1-|a_0|^2-|b_0|^2).
$$
From here, we derive the following inequality
$$(|a_n+e^{i\theta}b_n| +|a_n-e^{i\theta}b_n|)^2 \leq 4(1-|a_0|^2-|b_0|^2)^2
$$
which is equivalent to the inequality
$$|a_n|^2+|b_n|^2+|a_n^2-e^{i 2\theta}b_n^2| \leq 2(1-|a_0|^2-|b_0|^2)^2.
$$
Now, we choose $\theta=\pi/2+\arg a_n-\arg b_n$ and obtain
$$|a_n|^2+|b_n|^2 \leq (1-|a_0|^2-|b_0|^2)^2
$$
which gives  $\sqrt{|a_n|^2+|b_n|^2} \leq 1-|a_0|^2-|b_0|^2$ for $n\geq 1$. Therefore,
\beqq
\sum_{k=0}^\infty \left (\sqrt{|a_k|^2+|b_k|^2}\right )r^k &\leq &  \sqrt{|a_0|^2+|b_0|^2} + (1-|a_0|^2-|b_0|^2)\frac{r}{1-r}\\
& \leq & \sqrt{|a_0|^2+|b_0|^2} + (1-\sqrt{|a_0|^2+|b_0|^2})\frac{2r}{1-r} \\
&\leq 1& ~\mbox{ for $r \leq 1/3$}
\eeqq
and the proof is completed.
\epf

We observe that if the constant term $\sqrt{|a_0|^2+|b_0|^2}$ in \eqref{KP3-eq6} is replaced by $|a_0|^2+|b_0|^2$, then the number $1/3$ in
\eqref{KP3-eq6} can be replaced by $1/2$.

\br
{\rm
If $f=h+\overline{g}$ is a harmonic mapping of $\mathbb{D}$ such that  $|f(z)|<1$  and $h(0)=0=f(0)$, then the
subordination relation \eqref{KP3-eq2} gives the inequality
\be\label{KP3-eq3}
\sum_{k=1}^\infty |a_k -e^{-2i\theta}b_k|^2r^{2k} \leq \frac{16}{\pi ^2}\sum_{k=1}^\infty \frac{1}{(2k-1)^2}r^{2(2k-1)}
\ee
and the arbitrariness of $\theta$ in $[0, 2\pi )$ shows that
$$\sum_{k=1}^\infty (|a_k|^2 +|b_k|^2)r^{2k} \leq \frac{16}{\pi ^2}\sum_{k=1}^\infty \frac{1}{(2k-1)^2}r^{2(2k-1)}.
$$
From \eqref{KP3-eq3}, we also have
$$\sum_{k=1}^\infty |a_k -e^{-2i\theta}b_k|r^{k} \leq
\frac{4}{\pi }\sqrt{\sum_{k=1}^\infty \frac{1}{(2k-1)^2}(r\rho)^{2(2k-1)}} \left ( \frac{1}{\sqrt{\rho^{2}-1}}\right ),
$$
where $\rho >1$ such that $\rho r \leq 1$. The choice $\rho=1/r$ gives
\beqq
\sum_{k=1}^\infty |a_k -e^{-2i\theta}b_k|r^{k}  &\leq &
\frac{4}{\pi }\sqrt{\sum_{k=1}^\infty \frac{1}{(2k-1)^2}}  \frac{r}{\sqrt{1-r^2}}=\frac{4}{\pi }\sqrt{\frac{\pi^2}{8}}  \frac{r}{\sqrt{1-r^2}}=\frac{\sqrt{2}r}{\sqrt{1-r^2}}.
\eeqq
}\er

However, one can also obtain general results. To do this, let us recall some facts.
In the paper \cite{Abu} the Bohr radius for harmonic functions $f=h+\overline{g}$ was defined as the maximal $r$ for which
$$\sum_{k=1}^\infty {(|a_k| +|b_k|)}r^{k} \leq 1.
$$
However, one can give a more flexible definition of the Bohr radius in the case of harmonic functions.
For  $p \ge 1$, we say that $r_p$ is the $p$-Bohr radius for the harmonic function $f=h+\overline{g}$ of the unit disk $\ID$, where $g(0)=0$,
if $r_p$ is the largest value such that
$$\sum_{k=0}^\infty(|a_k|^p +|b_k|^p)^{1/p}r^{k} \leq 1  ~\mbox{ for $|z|=r \leq r_p$}.
$$
At first we remark that all these radii coincide in the analytic case. Moreover, essentially, they are equivalent in view of the estimates
$$\max(|a_k|,|b_k|) \leq (|a_k|^p +|b_k|^p)^{1/p}  \leq |a_k|+|b_k| \leq 2 \max(|a_k|,|b_k|).
$$
Note that the case $\max(|a_k|,|b_k|)$ corresponds to $p=\infty$. This approach can be used to prove the following general result.

\begin{thm}\label{KP3-th3}
Let $f=h+\overline{g}$ be a harmonic mapping in $\ID$ such that $|f(z)| \leq 1$ in $\ID$, where
$h(z)=\sum_{k=0}^{\infty}a_{k}z^{k}$ and $g(z)=\sum_{k=1}^{\infty}b_{k}z^{k}$.
Then for any $p \ge 1$ and $r<1$, the following inequality holds:
\beqq
\sum_{k=1}^\infty(|a_k|^p +|b_k|^p)^{1/p}r^{k}  &\leq &
 \max \{2^{(1/p)-1/2},1\}\sqrt{1-|a_0|^2}\frac{r}{\sqrt{1-r^2}}.
\eeqq
\end{thm}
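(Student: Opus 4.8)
The plan is to deduce the whole statement from a single $L^2$-estimate for $f$ on circles $|z|=r$, the elementary comparison of the $\ell^p$ and $\ell^2$ norms on $\mathbb R^2$, and one application of the Cauchy--Schwarz inequality; no subordination argument is needed here.

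First I would prove the coefficient bound
\[
\sum_{k=1}^\infty\bigl(|a_k|^2+|b_k|^2\bigr)\ \le\ 1-|a_0|^2 .
\]
Indeed, for a fixed $r\in(0,1)$ the function $t\mapsto f(re^{it})$ has the expansion $\sum_{k\ge0}a_kr^ke^{ikt}+\sum_{k\ge1}\overline{b_k}\,r^ke^{-ikt}$, and the exponentials $e^{ikt}$, $k\in\mathbb Z$, are pairwise orthogonal in $L^2[0,2\pi]$; hence, since $|f|\le1$,
\[
|a_0|^2+\sum_{k=1}^\infty\bigl(|a_k|^2+|b_k|^2\bigr)r^{2k}=\frac{1}{2\pi}\int_0^{2\pi}\bigl|f(re^{it})\bigr|^2\,dt\ \le\ 1 ,
\]
where we use $g(0)=0$ so that there is no interference at the $k=0$ mode; letting $r\to1^-$ and using monotone convergence gives the claim. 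In particular $|a_0|=|f(0)|\le1$, so the right-hand side of the theorem makes sense. This Parseval step is the only substantive point.

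Next I would record the scalar inequality $(x^p+y^p)^{1/p}\le \max\{2^{1/p-1/2},1\}\,(x^2+y^2)^{1/2}$ for $x,y\ge0$ and $p\ge1$: for $p\ge2$ it is the monotonicity $\|\cdot\|_p\le\|\cdot\|_2$, and for $1\le p\le2$ it is the Hölder comparison $\|\cdot\|_p\le 2^{1/p-1/2}\|\cdot\|_2$ on $\mathbb R^2$ (extremal at $x=y$). Applying this with $(x,y)=(|a_k|,|b_k|)$ and then Cauchy--Schwarz,
\[
\sum_{k=1}^\infty\bigl(|a_k|^p+|b_k|^p\bigr)^{1/p}r^k
\le \max\{2^{1/p-1/2},1\}\Bigl(\sum_{k=1}^\infty\bigl(|a_k|^2+|b_k|^2\bigr)\Bigr)^{1/2}\Bigl(\sum_{k=1}^\infty r^{2k}\Bigr)^{1/2},
\]
and inserting the bound from the first step together with $\sum_{k\ge1}r^{2k}=r^2/(1-r^2)$ yields exactly the asserted estimate. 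The only things to watch are that the constant in the $\ell^p$--$\ell^2$ comparison is precisely $\max\{2^{1/p-1/2},1\}$ (i.e.\ that it is $1$ for $p\ge2$) and that the Parseval identity is invoked only on circles $|z|=r<1$, where all series converge absolutely; I do not anticipate a serious obstacle.
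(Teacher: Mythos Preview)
Your proof is correct and follows essentially the same approach as the paper: a Parseval/mean-value computation on circles to get $\sum_{k\ge1}(|a_k|^2+|b_k|^2)\le 1-|a_0|^2$, the $\ell^p$--$\ell^2$ comparison on $\mathbb{R}^2$ giving the constant $\max\{2^{1/p-1/2},1\}$, and a single Cauchy--Schwarz step. The only cosmetic difference is the order in which the norm comparison and Cauchy--Schwarz are applied (the paper does Cauchy--Schwarz first and then bounds $(|a_k|^p+|b_k|^p)^{2/p}$ by $\max\{2^{2/p-1},1\}(|a_k|^2+|b_k|^2)$ inside the sum), which is immaterial.
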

\bpf
We have $ 1 \ge |h+\overline{g}|^2=(h+\overline{g})(\overline{h}+g)=|h|^2+|g|^2+2 {\rm Re}\, (hg)$
and thus,
\beqq
1 &\geq & \frac{1}{2\pi}\int_0^{2\pi}  |f(re^{i\theta})|^2\, d\theta
=  \frac{1}{2\pi} \int_0^{2\pi} ( |h(re^{i\theta})|^2+|g(re^{i\theta})|^2)\, d\theta\\
 &=& |a_0|^2+\sum_{k=1}^\infty (|a_k|^2 +|b_k|^2)r^{2k}
\eeqq
so that letting $r$ approach $1$, we get
$$|a_0|^2+\sum_{k=1}^\infty (|a_k|^2 +|b_k|^2) \leq 1.
$$
As a result of it, we obtain
\beqq
\sum_{k=1}^\infty (|a_k|^p +|b_k|^p)^{1/p}r^{k} &\leq &\sqrt{\sum_{k=1}^\infty(|a_k|^p +|b_k|^p)^{2/p}}{\sqrt{\sum_{k=1}^\infty r^{2k}}}\\
&\leq& \sqrt{\max\{ 2^{(2/p)-1}, 1\}\sum_{k=1}^\infty(|a_k|^2 +|b_k|^2)}\, \frac{r}{\sqrt{1-r^2}}\\
& \leq & \max\{2^{(1/p)-1/2},1\}\sqrt{1-|a_0|^2}\frac{r}{\sqrt{1-r^2}}
\eeqq
and the proof is complete.
\epf

In particular, Theorem \ref{KP3-th3} for $p=1$ shows that for $r\leq 1/3$,
$$\sum_{k=1}^\infty (|a_k| +|b_k|) r^k \leq \frac{\sqrt{1-|a_0|^2}}{2}\leq \frac{1}{2}.
$$
It is worth pointing out that Abu Muhanna \cite[Theorem 1]{Abu} obtained a similar inequality with $2/\pi \approx 0.63662$
instead of $(\sqrt{1-|a_0|^2})/2$. This observation shows that the above inequality is a vast improvement over the result of \cite{Abu}.

Note that $\max \{2^{(1/p)-1/2},1\}$ equals $1$ for $p\geq 2$, and equals $2^{(1/p)-1/2}$ for $p\in [1,2]$. This observation for
$a_0=0$ in Theorem \ref{KP3-th3} gives the following.

\begin{cor}\label{KP3-cor3}
Suppose that $f=h+\overline{g}$ is a harmonic mapping of $\ID$ such that $|f(z)| \leq 1$ in $\ID$, where
$h(z)=\sum_{k=1}^{\infty}a_{k}z^{k}$ and $g(z)=\sum_{k=1}^{\infty}b_{k}z^{k}$. If $p \ge 2$. Then
$$
\sum_{k=1}^\infty (|a_k|^p +|b_k|^p)^{1/p}r^{k} \leq 1 \mbox{ for } r \leq \frac{1}{\sqrt{2}}.
$$
The number $1/\sqrt{2}$ is sharp.
\end{cor}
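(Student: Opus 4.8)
The plan is to deduce the inequality directly from Theorem~\ref{KP3-th3} and then to check that $1/\sqrt2$ is attained. Since here $h(z)=\sum_{k=1}^\infty a_kz^k$ carries no constant term, we have $a_0=0$, and Theorem~\ref{KP3-th3} applied to this $f$ yields, for every $p\ge 1$ and $r<1$,
$$\sum_{k=1}^\infty(|a_k|^p+|b_k|^p)^{1/p}r^k\le \max\{2^{(1/p)-1/2},1\}\,\frac{r}{\sqrt{1-r^2}}.$$
When $p\ge 2$ one has $\frac1p-\frac12\le 0$, so $2^{(1/p)-1/2}\le 1$ and the factor equals $1$; hence the left-hand side is at most $r/\sqrt{1-r^2}$. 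This last quantity is $\le 1$ exactly when $r^2\le 1-r^2$, i.e. when $r\le 1/\sqrt2$, which is the asserted bound.

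For the sharpness I would take $g\equiv 0$ and let $h$ be the function $\varphi_\alpha$ of Section~\ref{KP2-sec2a} with $p=1$ and $\alpha=1/\sqrt2$, that is
$$h(z)=z\,\frac{\alpha-z}{1-\alpha z},\qquad \alpha=\frac{1}{\sqrt2},$$
so that $f=h$ is an (analytic, hence harmonic) self-map of $\ID$ with $h(0)=0$. Expanding the automorphism gives $h(z)=\alpha z+(\alpha^2-1)\sum_{n\ge 2}\alpha^{n-2}z^n$, so $b_k=0$ for all $k$, $|a_1|=\alpha$ and $|a_n|=(1-\alpha^2)\alpha^{n-2}$ for $n\ge 2$. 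Because $(|a_k|^p+|b_k|^p)^{1/p}=|a_k|$ for this $f$, we get
$$\sum_{k=1}^\infty(|a_k|^p+|b_k|^p)^{1/p}r^k=\alpha r+(1-\alpha^2)\frac{r^2}{1-\alpha r},$$
and substituting $r=\alpha=1/\sqrt2$ (note $\alpha r=1/2<1$) gives $\alpha^2+\alpha^2=1$. Since the right-hand side is strictly increasing in $r$ on $(0,1)$, the inequality fails for every $r>1/\sqrt2$, so the constant $1/\sqrt2$ cannot be improved.

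The argument is short and I do not anticipate a genuine obstacle; the only point that needs attention is compatibility of the extremal example with the hypotheses --- the constant terms of both $h$ and $g$ must vanish --- which is arranged by multiplying the disk automorphism by $z$ and taking $g\equiv 0$. As a sanity check one may note that this analytic example with $g\equiv 0$ makes every estimate used in the proof of Theorem~\ref{KP3-th3} (the area/Parseval bound, the Cauchy--Schwarz step, and the trivial inequality $2^{(2/p)-1}\le 1$) an equality up to the value of $r$, which is exactly why the resulting radius is best possible.
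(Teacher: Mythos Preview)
Your argument is correct and follows the paper's own route: the inequality is deduced exactly as the paper intends, by setting $a_0=0$ in Theorem~\ref{KP3-th3} and noting that $\max\{2^{(1/p)-1/2},1\}=1$ for $p\ge 2$, whence $r/\sqrt{1-r^2}\le 1$ forces $r\le 1/\sqrt2$. For the sharpness the paper does not write out a separate proof at this point but refers back (in the remark following Corollary~\ref{Lemma-1}) to the very same extremal $\varphi_\alpha(z)=z(\alpha-z)/(1-\alpha z)$ with $\alpha=1/\sqrt2$ and $g\equiv 0$; you have simply supplied the explicit coefficient computation that the paper leaves to the reader, and your observation that all three inequalities in the proof of Theorem~\ref{KP3-th3} become equalities for this $f$ at $r=1/\sqrt2$ is a correct and useful check.
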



\begin{cor}\label{KP3-cor4}
Suppose that $f=h+\overline{g}$ is  a harmonic mapping in $\ID$ such that $|f(z)| \leq 1$ in $\ID$, where
$h(z)=\sum_{k=0}^{\infty}a_{k}z^{k}$ and $g(z)=\sum_{k=1}^{\infty}b_{k}z^{k}$. If $p \in [1,2]$ and $|a_0|<1$, then
\be\label{KP3-eq7}
|a_0|+\sum_{k=1}^\infty \left (|a_k|^p +|b_k|^p\right )^{1/p}r^{k} \leq 1 \mbox{ for } r \leq r_p(|a_0|)
=\sqrt{\frac{1-|a_0|}{2^{(2/p)-1}+1+(2^{(2/p)-1}-1)|a_0|}}.
\ee
In the case $p \ge 2$ the following inequality holds:
\begin{equation} \label{Sharp}
|a_0|+\sum_{k=1}^\infty \left (|a_k|^p +|b_k|^p\right )^{1/p}r^{k} \leq 1 \mbox{ for } r \leq \sqrt{\frac{1-|a_0|}{2}}.
\end{equation}
\end{cor}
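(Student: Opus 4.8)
The plan is to obtain both inequalities as immediate consequences of Theorem \ref{KP3-th3}, followed by a one‑variable analysis in $r$. Write $a=|a_0|\in[0,1)$ and set $C_p=\max\{2^{(1/p)-1/2},1\}$, so that $C_p=2^{(1/p)-1/2}$ when $p\in[1,2]$ and $C_p=1$ when $p\ge 2$. Theorem \ref{KP3-th3} gives
$$|a_0|+\sum_{k=1}^\infty\left(|a_k|^p+|b_k|^p\right)^{1/p}r^{k}\le a+C_p\sqrt{1-a^2}\,\frac{r}{\sqrt{1-r^2}},$$
so it suffices to determine the largest $r\in(0,1)$ for which the right‑hand side is $\le 1$.

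First I would rewrite the scalar inequality $a+C_p\sqrt{1-a^2}\,r/\sqrt{1-r^2}\le 1$ as $C_p\sqrt{1-a^2}\,r\le(1-a)\sqrt{1-r^2}$. Since $a<1$, both sides are nonnegative, so this is equivalent to its square $C_p^2(1-a^2)r^2\le(1-a)^2(1-r^2)$. Factoring $1-a^2=(1-a)(1+a)$ and dividing through by $1-a>0$, then collecting the $r^2$ terms, yields
$$r^2\bigl[C_p^2(1+a)+(1-a)\bigr]\le 1-a,\qquad\text{i.e.}\qquad r^2\le\frac{1-a}{C_p^2(1+a)+(1-a)}.$$
Because $r\mapsto r/\sqrt{1-r^2}$ is strictly increasing on $(0,1)$, this bound on $r^2$ describes exactly the set of $r$ for which the estimate holds, with equality at the endpoint; in particular the inequality for a given admissible $r$ follows from the inequality at the threshold.

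Finally I would substitute the two values of $C_p$. For $p\in[1,2]$ we have $C_p^2=2^{(2/p)-1}$, hence $C_p^2(1+a)+(1-a)=\bigl(2^{(2/p)-1}+1\bigr)+\bigl(2^{(2/p)-1}-1\bigr)a$, which reproduces $r_p(|a_0|)$ in \eqref{KP3-eq7}. For $p\ge 2$ we have $C_p^2=1$, so $C_p^2(1+a)+(1-a)=2$ and the threshold is $\sqrt{(1-|a_0|)/2}$, giving \eqref{Sharp}. There is no genuine obstacle here beyond bookkeeping; the only steps requiring a word of care are the legitimacy of squaring (which is fine since $|a_0|<1$ makes both sides nonnegative) and the monotonicity of $r/\sqrt{1-r^2}$, which guarantees that proving the estimate at $r=r_p(|a_0|)$ suffices for all smaller $r$.
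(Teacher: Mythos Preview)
Your proof is correct and follows exactly the same approach as the paper: apply Theorem~\ref{KP3-th3} and then solve the resulting scalar inequality $|a_0|+C_p\sqrt{1-|a_0|^2}\,r/\sqrt{1-r^2}\le 1$ for $r$. The paper merely states that this scalar inequality is equivalent to $r\le r_p(|a_0|)$ without writing out the algebra, whereas you have filled in those routine steps explicitly.
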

\bpf
Clearly, for $p \in [1,2]$ and $|a_0|<1$,  Theorem \ref{KP3-th3} gives that
$$|a_0|+\sum_{k=1}^\infty \left (|a_k|^p +|b_k|^p\right )^{1/p}r^{k} \leq |a_0|+
2^{(1/p)-1/2}\,\sqrt{1-|a_0|^2}\frac{r}{\sqrt{1-r^2}}
$$
which is less than or equal to $1$ if  $ r \leq r_p(|a_0|)$. This gives the condition \eqref{KP3-eq7}. The case  $p \ge 2$
follows similarly.
\epf

For the harmonic functions, the case $p=1$ of \eqref{KP3-eq7} gives
$$|a_0|+\sum_{k=1}^\infty (|a_k| +|b_k|)r^{k} \leq 1 \mbox{ for } r \leq r_1(|a_0|)
=\sqrt{\frac{1-|a_0|}{3 +|a_0|}}.
$$
Also, we remark that in the case $|a_0|=1$ in Corollary \ref{KP3-cor4} we have $a_k=b_k=0$ for all $k \ge 1$.

\br
{\rm
The example
$$f(z)=\frac{z+a_0}{1+\overline{a_0} z}=\sum_{k=0}^\infty a_k z^k
$$
shows that in the case $p \ge 2$ and $|a_0|=1/2$ the inequality (\ref{Sharp}) is sharp for the analytic case. This fact follows from the identity
$$\sum_{k=0}^\infty |a_k| r^k=|a_0|+\frac{r(1-|a_0|^2)}{1-r|a_0|}
$$
which implies that the Bohr radius for this function is $1/(1+2|a_0|).$
}
\er

We know that in the classical case the Bohr radius is $1/3$. In harmonic case, the $p$-Bohr radius can be also $1/3$
under an additional assumption on the constant coefficient $a_0$.

\begin{cor}\label{KP3-cor5}
Let $f=h+\overline{g}$ be a harmonic mapping in $\ID$ such that $|f(z)| \leq 1$ in $\ID$, where
$h(z)=\sum_{k=0}^{\infty}a_{k}z^{k}$ and $g(z)=\sum_{k=1}^{\infty}b_{k}z^{k}$. If $p \in [1, 2]$ and
\be\label{KP2-eq5}
|a_0| \leq A(p)=\frac{8-2^{(2/p)-1}}{8+2^{(2/p)-1}},
\ee
then
$$|a_0|+\sum_{k=1}^\infty (|a_k|^p +|b_k|^p)^{1/p}r^{k} \leq 1 \mbox{ for } r \leq 1/3.
$$
\end{cor}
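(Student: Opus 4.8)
The plan is to obtain this as an immediate consequence of Corollary~\ref{KP3-cor4}, so that the whole argument collapses to a single algebraic inequality. First I would set $c=2^{(2/p)-1}$ and observe that for $p\in[1,2]$ one has $(2/p)-1\in[0,1]$, hence $c\in[1,2]$; in particular $c<8$ and $A(p)=\frac{8-c}{8+c}\in(0,1)$. By Corollary~\ref{KP3-cor4}, under the standing hypotheses the inequality
$$|a_0|+\sum_{k=1}^\infty\left(|a_k|^p+|b_k|^p\right)^{1/p}r^k\le 1$$
holds for every $r\le r_p(|a_0|)$, where $r_p(|a_0|)=\sqrt{\frac{1-|a_0|}{c+1+(c-1)|a_0|}}$ (this is well defined since $|a_0|\le A(p)<1$). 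Therefore it suffices to show that the assumption $|a_0|\le A(p)$ forces $r_p(|a_0|)\ge 1/3$; once this is established, every $r\le 1/3$ lies in the range covered by Corollary~\ref{KP3-cor4} and the conclusion follows.

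The remaining step is a routine rearrangement. Squaring (legitimate, both sides being nonnegative), the inequality $r_p(|a_0|)\ge 1/3$ is equivalent to $9(1-|a_0|)\ge c+1+(c-1)|a_0|$, where the denominator $c+1+(c-1)|a_0|$ is strictly positive because $c\ge 1$. Expanding and collecting the terms in $|a_0|$ turns this into $(8+c)|a_0|\le 8-c$, i.e. $|a_0|\le\frac{8-c}{8+c}=A(p)$, which is precisely the hypothesis \eqref{KP2-eq5}. Reading the chain of equivalences backwards gives $r_p(|a_0|)\ge 1/3$, and hence the asserted bound on $[0,1/3]$.

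There is essentially no obstacle here: the only point that deserves a moment's care is that the algebraic manipulation is reversible — which it is, since both $8+c$ and $c+1+(c-1)|a_0|$ are strictly positive throughout the admissible range of $p$ — and that the extremal case $|a_0|=A(p)$ is permitted, yielding exactly $r_p(|a_0|)=1/3$. No monotonicity argument in $r$ is needed, because Corollary~\ref{KP3-cor4} already supplies the estimate on the whole interval $0\le r\le r_p(|a_0|)$.
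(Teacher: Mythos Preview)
Your argument is correct and is exactly the approach the paper takes: reduce to Corollary~\ref{KP3-cor4} and verify algebraically that $|a_0|\le A(p)$ is equivalent to $r_p(|a_0|)\ge 1/3$. The paper states this in a single sentence without writing out the algebra; your version simply fills in those details.
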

\bpf Suffices to observe that $r_p(|a_0|)$ given in Corollary \ref{KP3-cor4} shows that
$r_p(|a_0|)\geq 1/3$ whenever $|a_0|$ satisfies the inequality  \eqref{KP2-eq5}. The desired conclusion follows.
\epf

\br
{\rm
Since $|a_0|\leq 1$, the question is whether the range for $a_0$ in Corollary \ref{KP3-cor5} could be enlarged.
Note also that $A(p)$ is an increasing function of $p$ and thus, $A(p)\in [3/5, 7/9]$ for $p \in [1, 2]$. We now show that
in the case of $p=1$, the value $A(1)$ cannot be greater than $0.67404$ so that the number $3/5=0.6$ is quite close to be sharp.
To see this, we will use an example from the paper \cite{Abu}:
$$ f(z)= \frac{2}{\pi}\mbox{\rm Im}\left(\log\frac{1+ z}{1-z}\right)\sin \mu + i\cos\mu
$$
which may be rewritten as
$$ f(z)= a_0+\sum_{k=1}^{\infty}a_{k}z^{k}  +\sum_{k=1}^{\infty}b_{k}\overline{z^{k}}
$$
where
$$a_0= i\cos\mu ~\mbox{ and }~ a_k=\frac{2\sin \mu}{\pi (2k-1)}=-b_k \mbox{ for $k\geq 1$}.
$$
Consequently,
$$|a_0|+\sum_{k=1}^\infty (|a_k|^p +|b_k|^p)^{1/p}r^{k}= |\cos\mu|+ \frac{2^{1/p}}{\pi}\log\left(\frac{1+ r}{1-r}\right)|\sin \mu| .
 $$
Thus, for the case $r=1/3$, the last expression is greater than or equal to $1$ whenever
$$ |a_0|+ \frac{2^{1/p}\,\log 2}{\pi} \sqrt{1-|a_0|^2}   \geq 1
$$
which gives the condition
$$|a_0|=|\cos\mu| \ge \frac{\pi^2-2^{2/p} \log^2 2}{\pi^2+2^{2/p} \log^2 2}.
$$
This observation means that
$$A(p) \leq \frac{\pi^2-2^{2/p} \log^2 2}{\pi^2+2^{2/p} \log^2 2}.
$$
For instance, $0.6 \leq A(1) \leq 0.67404$ and $\frac{7}{9} \leq A(2) \leq 0.82256$.
}
\er

Finally, as a harmonic analog of Theorem \Ref{KP2-odd1}, we have the following result in a general form and the proof of
it follows from the lines of proof of Theorem \ref{KP3-th3}. So we omit its detail.

\begin{thm}\label{KP3-th4}
Let $f(z)=\sum_{k=0}^{\infty}a_{2k+1}z^{2k+1} + \sum_{k=0}^{\infty}\overline{b_{2k+1}z^{2k+1}}$ be
an odd harmonic function in $\ID$ such that $|f(z)| <1$ in $\ID$.
Then for $p \in [1,2]$ we have the following estimate:
$$\sum_{k=0}^\infty (|a_{2k+1}|^p +|b_{2k+1}|^p)^{1/p}r^{2k+1}\leq  2^{(1/p)-1/2}\frac{r}{\sqrt{1-r^4}} \leq 1
~\mbox{ for }~ r \leq \rho_p
$$
where $\rho_p =\sqrt{\sqrt{4^{(2/p)-2}+1}-2^{(2/p)-2}}. $ In the case $p \ge 2$ the following inequality holds:
$$\sum_{k=0}^\infty (|a_{2k+1}|^p +|b_{2k+1}|^p)^{1/p}r^{2k+1}\leq  \frac{r}{\sqrt{1-r^4}} \leq 1 ~\mbox{ for }~
r \leq \rho_2=\sqrt{\frac{\sqrt{5}-1}{2}}.
$$
\end{thm}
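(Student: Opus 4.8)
The plan is to run the same $L^2$-averaging argument used in the proof of Theorem \ref{KP3-th3}, exploiting the fact that an odd harmonic function automatically satisfies $f(0)=0$, so that we are effectively in the $a_0=0$ situation of that theorem. First I would write $f=h+\overline{g}$ with $h(z)=\sum_{k=0}^\infty a_{2k+1}z^{2k+1}$ and $g(z)=\sum_{k=0}^\infty b_{2k+1}z^{2k+1}$; since $h(0)=g(0)=0$ the product $hg$ is analytic with $(hg)(0)=0$, so the cross term in $|h+\overline g|^2=|h|^2+|g|^2+2\real(hg)$ integrates to zero over every circle $|z|=r$, whence
$$
1\ge\frac{1}{2\pi}\int_0^{2\pi}|f(re^{i\theta})|^2\,d\theta=\sum_{k=0}^\infty\bigl(|a_{2k+1}|^2+|b_{2k+1}|^2\bigr)r^{4k+2}.
$$
Letting $r\to1^{-}$ gives the basic coefficient bound $\sum_{k=0}^\infty(|a_{2k+1}|^2+|b_{2k+1}|^2)\le1$.

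Next I would apply the Cauchy--Schwarz inequality exactly as in Theorem \ref{KP3-th3}:
$$
\sum_{k=0}^\infty\bigl(|a_{2k+1}|^p+|b_{2k+1}|^p\bigr)^{1/p}r^{2k+1}\le\sqrt{\sum_{k=0}^\infty\bigl(|a_{2k+1}|^p+|b_{2k+1}|^p\bigr)^{2/p}}\;\sqrt{\sum_{k=0}^\infty r^{4k+2}},
$$
then invoke the elementary two-point power-mean inequality $(x^p+y^p)^{2/p}\le\max\{2^{(2/p)-1},1\}(x^2+y^2)$ together with the coefficient bound just obtained, and evaluate the geometric series $\sum_{k=0}^\infty r^{4k+2}=r^2/(1-r^4)$. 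This yields
$$
\sum_{k=0}^\infty\bigl(|a_{2k+1}|^p+|b_{2k+1}|^p\bigr)^{1/p}r^{2k+1}\le\max\{2^{(1/p)-1/2},1\}\,\frac{r}{\sqrt{1-r^4}},
$$
which is the asserted estimate once one records that $\max\{2^{(1/p)-1/2},1\}=2^{(1/p)-1/2}$ for $p\in[1,2]$ and equals $1$ for $p\ge2$.

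Finally I would pin down the radius by solving $\max\{2^{(1/p)-1/2},1\}\,r/\sqrt{1-r^4}\le1$. Squaring turns this into a quadratic inequality in $t=r^2$: for $p\in[1,2]$ it reads $t^2+2^{(2/p)-1}t-1\le0$, whose positive root is $t=\sqrt{4^{(2/p)-2}+1}-2^{(2/p)-2}$, giving $r\le\rho_p$; for $p\ge2$ it reads $t^2+t-1\le0$, with positive root $t=(\sqrt5-1)/2$, giving $r\le\rho_2$. There is no genuine obstacle here: the argument is a direct transcription of the proof of Theorem \ref{KP3-th3}, and the only points needing care are the exponent bookkeeping (the odd-power series produces $r^{4k+2}$ rather than $r^{2k}$, hence the $\sqrt{1-r^4}$ in place of $\sqrt{1-r^2}$) and checking that the quadratic-in-$r^2$ computation reproduces the stated closed forms for $\rho_p$ and $\rho_2$. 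This is presumably why the authors note that the proof "follows from the lines of proof of Theorem \ref{KP3-th3}" and omit the details.
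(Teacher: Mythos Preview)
Your proposal is correct and follows precisely the approach the paper indicates: it is a direct transcription of the proof of Theorem~\ref{KP3-th3}, with the odd-power structure producing $\sum_{k\ge0}r^{4k+2}=r^2/(1-r^4)$ in place of $\sum_{k\ge1}r^{2k}=r^2/(1-r^2)$ and with $a_0=0$. The exponent bookkeeping and the quadratic-in-$t=r^2$ computations you carry out recover the stated radii $\rho_p$ and $\rho_2$ exactly.
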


Let us remark that the number $\rho_1=\sqrt{\sqrt{2}-1}=0.6435 \ldots $ cannot be replaced by a number greater than
$$r_0=\frac{e^{\pi/2}-1}{e^{\pi/2}+1}=\tanh \frac{\pi}{4}=0.65579\ldots
$$
Thus, it is natural to conjecture that the Bohr radius in the case of harmonic odd function is
$r_0\approx 0.65579\ldots .$

Also, let us remark that $\rho_2=\sqrt{(\sqrt{5}-1)/2}=0.7861\ldots $. This number cannot be more than  $0.789991$.

\subsection*{Acknowledgements}
The research of the first author was supported by Russian foundation for basic research, Proj. 17-01-00282, and the
research of the second author was supported by the project RUS/RFBR/P-163 under Department of Science \& Technology (India).
The second author is currently on leave from  IIT Madras.


\end{document}